\documentclass[12pt,leqno]{article}
\usepackage{graphicx}
\usepackage{amssymb, amscd, amsmath, amsthm}
\usepackage{dsfont}
\allowdisplaybreaks

\def\beq{\begin{equation}}
\def\eeq{\end{equation}}

\theoremstyle{definition}
\newtheorem{definition}{Definition}

\newtheorem*{remark}{Remark}

\theoremstyle{plain}
\newtheorem*{lemmaKl}{Kleitman Lemma}

\newtheorem{theorem}[definition]{Theorem}

\newtheorem{lemma}{Lemma}
%%\newtheorem{corollary}{Corollary}\newtheorem{lemma}[definition]{Lemma}\newtheorem{observation}[example]{Observation}
%%\newtheorem{claim}{Claim}

%%\numberwithin{definition}{section}
%%\numberwithin{equation}{section}
%%\numberwithin{example}{section}
%%\numberwithin{proposition}{section}
%\numberwithin{problem}{section}
%\numberwithin{conjecture}{section}
%\numberwithin{corollary}{section}
%%\numberwithin{claim}{section}
%\numberwithin{lemma}{section}
%\numberwithin{theorem}{section}

\begin{document}

\title{New inequalities for families without $k$ pairwise disjoint members}

\author{Peter Frankl\footnote{R\'enyi Institute, Budapest, Hungary}, Andrey Kupavskii\footnote{Moscow Institute of Physics and Technology, University of Birmingham;  Email: {\tt kupavskii@yandex.ru}  \ \ Research supported by the grant RNF~16-11-10014 and by the EPSRC grant no. EP/N019504/1.}}

%%\thanks{}

\date{}
\maketitle

\begin{abstract}
Some best possible inequalities are established for $k$\emph{-partition-free} families (cf. Definition~\ref{def:2}) and they are applied to prove a sharpening of a classical result of Kleitman concerning families without $k$ pairwise disjoint members.
\end{abstract}

\section{Introduction}
\label{sec:1}

Let $n$ be a positive integer, $[n] = \{1,2,\dots, n\}$ is the standard $n$-element set, $2^{[n]}$ its power set.
For an integer $k \geq 2$ a family $\mathcal F \subset 2^{[n]}$ is called $k${\it -dependent} if it contains no $k$ pairwise disjoint members.
Similarly, if $\mathcal F_1, \dots, \mathcal F_k \subset 2^{[n]}$ are not necessarily distinct families, we say that they are \emph{cross-dependent} if there is no choice of $F_i \in \mathcal F_i$, $i = 1,\dots, k$, such that $F_1, \dots, F_k$ are pairwise disjoint.

An important classical result of Kleitman \cite{Kl} determines the maximal size, $|\mathcal F|$ of a $k$-dependent family $\mathcal F \subset 2^{[n]}$ for the cases $n \equiv -1$ or $0$ (mod~$k$).
In a recent paper \cite{FK}, Kupavskii and the author determined the maximum of $|\mathcal F_1| + \ldots + |\mathcal F_k|$ for cross-dependent families $\mathcal F_i$ for all values of $n \geq k \geq 3$.
(Let us note that the easy case of $k = 2$ was already solved by Erd\H{o}s, Ko and Rado \cite{EKR}.)

\begin{definition}
\label{def:2}
For $k \geq 3$ and a family $\mathcal F \subset 2^{[n]}$ we say that $\mathcal F$ is $k$\emph{-partition-free} if $\mathcal F$ contains no $k$ pairwise disjoint members whose union is $[n]$.

Being $k$\emph{-partition-free} is slightly less restrictive than being $k$-dependent.
\end{definition}
%%%%

For $0 \leq j \leq n$ let us use the notations $\mathcal F^{(j)} = \mathcal F \cap {[n]\choose j}$, $f^{(j)} = |\mathcal F^{(j)}|$.

The following inequality is an important discovery of Kleitman \cite{Kl}.

\begin{lemmaKl}
Let $\mathcal F \subset 2^{[n]}$ be $k$-partition-free and let $j_1, j_2, \dots, j_k$ be non-negative integers satisfying
$j_1 + \ldots + j_k = n$.
Then
\beq
\label{eq:1}
\sum_{1 \leq i \leq k}  \frac{f^{(j_i)}}{{n\choose j_i}} \leq k - 1.
\eeq
\end{lemmaKl}

The proof of \eqref{eq:1} is an easy averaging over all choices of pairwise disjoint sets $G_1, \dots, G_k$ satisfying $|G_i| = j_i$ and noting that at least one of the relations $G_i \in \mathcal F$ fails.

Since the relation $j_1 + \ldots + j_k = n$ is essential for proving \eqref{eq:1} it is rather surprising that in certain cases one can prove the analogous inequality even if $j_1 + \ldots + j_k > n$.

Let us first state our inequality for the case $k = 3$.

\begin{theorem}
\label{th:1}
Let $m > \ell > 0$ be integers, $n = 3m - \ell$.
Suppose that $\mathcal F \subset 2^{[n]}$ is $3$-partition-free.
Then
\beq
\label{eq:2}
\frac{|\mathcal F^{(m - \ell)}|}{{n\choose m - \ell}} + \frac{|\mathcal F^{(m)}|}{{n\choose m}} + \frac{|\mathcal F^{(m + \ell)}|}{{n\choose m + \ell}} \leq 2.
\eeq
\end{theorem}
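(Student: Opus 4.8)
The plan is to normalize and work with the three densities
\[
a=\frac{|\mathcal F^{(m-\ell)}|}{\binom{n}{m-\ell}},\qquad b=\frac{|\mathcal F^{(m)}|}{\binom{n}{m}},\qquad c=\frac{|\mathcal F^{(m+\ell)}|}{\binom{n}{m+\ell}},
\]
and to prove $a+b+c\le 2$. First I would delete from $\mathcal F$ every member whose size is not one of $m-\ell,m,m+\ell$; this changes none of $a,b,c$ and cannot spoil $3$-partition-freeness. The key structural remark is that, among these three sizes, the only multisets summing to $n=3m-\ell$ are $(m-\ell,m-\ell,m+\ell)$ and $(m-\ell,m,m)$. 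Since pairwise disjoint sets whose sizes sum to $n$ automatically partition $[n]$, the hypothesis says exactly that $\mathcal F$ contains no two disjoint $(m-\ell)$-sets together with the complementary $(m+\ell)$-set (a \emph{Type~1} configuration) and no $(m-\ell)$-set together with two disjoint $m$-sets filling its complement (\emph{Type~2}).

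Applying the Kleitman Lemma to the two profiles $(m-\ell,m-\ell,m+\ell)$ and $(m-\ell,m,m)$, both of total size $n$, yields the two basic inequalities $2a+c\le 2$ and $a+2b\le 2$. The first obstacle, and in my view the crux, is that these are not enough: their linear span only gives $a+b+c\le\tfrac94$, attained at $(a,b,c)=(\tfrac12,\tfrac34,1)$, and no nonnegative combination of averaging inequalities can do better, since the three target sizes sum to $n+\ell>n$ and hence no single profile of total size $n$ simultaneously sees all three levels. Thus the bound $2$ cannot come from size-profile averaging and must use the genuine set-structure inside a fixed level.

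To supply that extra input I would translate the configurations into shadow statements. Let $\mathcal A^{\ast}$ be the family of complements $[n]\setminus(A_1\cup A_2)$ of disjoint pairs $A_1,A_2\in\mathcal F^{(m-\ell)}$; then Type~1 says $\mathcal F^{(m+\ell)}$ is disjoint from $\mathcal A^{\ast}$, so $c\le 1-|\mathcal A^{\ast}|/\binom{n}{m+\ell}$. Dually, with $\mathcal B^{\ast}$ the complements of disjoint pairs from $\mathcal F^{(m)}$, Type~2 gives $a\le 1-|\mathcal B^{\ast}|/\binom{n}{m-\ell}$. A Kruskal--Katona/Frankl-type lower bound on the sizes of these ``disjoint-union'' shadows in terms of $a$ and $b$ should then force $a+b+c\le 2$; the extremal instance is $\mathcal F^{(m-\ell)}=\varnothing$ and $\mathcal F^{(m)}=\binom{[n]}{m}$, $\mathcal F^{(m+\ell)}=\binom{[n]}{m+\ell}$, where $b=c=1$ and already every disjoint pair of $m$-sets realizes all of $\binom{[n]}{m-\ell}$ as $\mathcal B^{\ast}$.

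The main obstacle is precisely this combination step: because $2a+c\le2$ and $a+2b\le2$ are individually tight but jointly slack by $\tfrac14$, one must run the two shadow bounds \emph{simultaneously} and control the cross-term, which is delicate when one level is intersecting (tiny shadow) while another is nearly complete. I expect the cleanest route is either to pass first to a shifted (compressed) family, so that membership at each level becomes monotone and the two coupled configurations can be analysed set by set, or to invoke the cross-dependent families theorem of \cite{FK} for the triples $(\mathcal F^{(m-\ell)},\mathcal F^{(m-\ell)},\mathcal F^{(m+\ell)})$ and $(\mathcal F^{(m-\ell)},\mathcal F^{(m)},\mathcal F^{(m)})$, whose exact extremal structure carries strictly more information than the averaged inequalities and should be what closes the gap from $\tfrac94$ down to $2$.
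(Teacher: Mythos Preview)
Your diagnosis of the obstacle is exactly right: the two applications of the Kleitman Lemma to the profiles $(m-\ell,m-\ell,m+\ell)$ and $(m-\ell,m,m)$ yield only $2a+c\le 2$ and $a+2b\le 2$, and no nonnegative combination of these reaches $a+b+c\le 2$. But from that point on the proposal stops being a proof. The quantity $|\mathcal A^{\ast}|$ you introduce---complements of disjoint unions of pairs from $\mathcal F^{(m-\ell)}$---is not a shadow in the Kruskal--Katona sense, and no off-the-shelf inequality lower-bounds it in terms of $a$ alone: if $\mathcal F^{(m-\ell)}$ is intersecting then $\mathcal A^{\ast}=\varnothing$ no matter how large $a$ is, so the bound $c\le 1-|\mathcal A^{\ast}|/\binom{n}{m+\ell}$ is vacuous in that regime. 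The alternative of invoking \cite{FK} is likewise not an argument: that theorem bounds $|\mathcal F_1|+|\mathcal F_2|+|\mathcal F_3|$ for cross-\emph{dependent} families, not cross-partition-free ones, and in any case it returns set-counts rather than the density combination $a+b+c$; you have not indicated how its extremal structure would feed back into the present inequality. In short, the missing idea is still missing.

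The paper's proof takes a route you do not mention at all: Katona's cyclic permutation method. One fixes a cyclic order on $[n]$ and builds three explicit test families $\mathcal A,\mathcal B,\mathcal C$ of sets of sizes $m-\ell$, $m$, $m+\ell$; the $m$-sets $B_r$ are consecutive arcs, each $A_r$ is the initial $(m-\ell)$-segment of $B_r$, and---this is the key trick---each $C_r$ is \emph{not} an arc but the union $B_r\cup(B_{r+1}\setminus A_{r+1})$, arranged so that $C_r\cup A_{r+1}=B_r\cup B_{r+1}$. This manufactured identity supplies, for every $r$, a partition of $[n]$ that uses all three levels at once, exactly the ingredient that pure profile-averaging cannot see. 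A short injection argument then gives $|\mathcal F\cap\mathcal A|+|\mathcal F\cap\mathcal B|+|\mathcal F\cap\mathcal C|\le 2n/d$, and averaging over cyclic orders finishes. The construction of $C_r$ is what replaces your hoped-for shadow bound.
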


Looking at the family ${[n]\choose m} \cup {[n]\choose m + \ell}$ shows that \eqref{eq:2} is best possible.

To state our most general result let us say that the families $\mathcal F_1,\dots, \mathcal F_k \subset 2^{[n]}$ are \emph{cross-partition-free} if there is no choice of $F_i \in \mathcal F_i$, $i = 1,\dots, k$ such that $F_1, \dots, F_k$ form a partition of~$[n]$.

\begin{theorem}
\label{th:3}
Let $m > \ell > 0$ be integers, $n = km - \ell$, $k \geq 3$.
For $1 \leq i \leq k$ let $\mathcal F_i \subset {[n]\choose m - \ell} \cup {[n]\choose m} \cup {[n]\choose m + \ell}$ and suppose that $\mathcal F_1, \dots, \mathcal F_k$ are cross-parti\-tion-free.
Then
\beq
\label{eq:3}
\sum_{1 \leq i \leq k} \frac{\bigl|\mathcal F_i^{(m - \ell)}\bigr|}{{n\choose m - \ell}} + \frac{\bigl|\mathcal F_i^{(m)}\bigr|}{{n\choose m}} + (k - 2) \frac{\bigl|\mathcal F_i^{(m + \ell)}\bigr|}{{n\choose m + \ell}} \leq (k - 1)k.
\eeq
\end{theorem}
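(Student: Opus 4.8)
The plan is to induct on $k$, proving \eqref{eq:3} for all $k\ge 2$ (for $k=2$ the $(m+\ell)$-term carries weight $k-2=0$, so only the two lower layers enter, and Theorem~\ref{th:1} is the equal-family case $k=3$). The base case $k=2$, where $n=2m-\ell$, is handled by complementation. Here $\binom{n}{m-\ell}=\binom{n}{m}$, and cross-partition-freeness says exactly that $F_1\in\mathcal F_1$ forces $[n]\setminus F_1\notin\mathcal F_2$. Applying this to the $(m-\ell)$-layer of $\mathcal F_1$ (whose complements lie in the $m$-layer) and to the $m$-layer of $\mathcal F_1$ (complements in the $(m-\ell)$-layer) gives $\frac{|\mathcal F_1^{(m-\ell)}|}{\binom{n}{m}}+\frac{|\mathcal F_2^{(m)}|}{\binom{n}{m}}\le 1$ and $\frac{|\mathcal F_1^{(m)}|}{\binom{n}{m}}+\frac{|\mathcal F_2^{(m-\ell)}|}{\binom{n}{m}}\le 1$; adding them yields the bound $2=(k-1)k$.

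For the inductive step I would exploit that deleting an $m$-block keeps the parameters $(m,\ell)$ intact. If $B\in\mathcal F_i^{(m)}$, then the traces $\mathcal G_j:=\mathcal F_j\cap 2^{[n]\setminus B}$, $j\ne i$, are cross-partition-free on the $((k-1)m-\ell)$-element set $[n]\setminus B$, since completing a partition of $[n]\setminus B$ drawn from the $\mathcal G_j$ by the block $B\in\mathcal F_i$ would partition $[n]$. The induction hypothesis then gives, for each admissible pair $(i,B)$, the $(k-1)$-family instance of \eqref{eq:3} on $[n]\setminus B$, in which the $(m+\ell)$-layer carries weight $k-3$. The idea is to average these over all admissible $(i,B)$ and sum over $i$. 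The combinatorial engine of the averaging is the identity $\binom{n-s}{m}\binom{n}{s}=\binom{n}{m}\binom{n-m}{s}$: it shows that for a \emph{uniformly} random $m$-set $B$ one has $\mathbb E_B\bigl[|\mathcal G_j^{(s)}(B)|\bigr]/\binom{n-m}{s}=|\mathcal F_j^{(s)}|/\binom{n}{s}$, so a clean average would reproduce the left side of \eqref{eq:3} with the weights of the smaller problem, after which summing over $i$ would, at least formally, assemble the full statement.

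The main obstacle is that the blocks $B$ must be sampled from $\mathcal F_i^{(m)}$ rather than uniformly, so the genuine average involves the numbers of disjoint pairs $(B,F)$ with $B\in\mathcal F_i^{(m)}$, $F\in\mathcal F_j^{(s)}$, which are not functions of the densities alone; quantifying the gain of this restriction over the uniform average is precisely what must upgrade the weight on the $(m+\ell)$-layer from $k-3$ to $k-2$ and the bound from $k(k-2)$ to $(k-1)k$. I expect this gap cannot be closed by any pure averaging/linear-programming argument over partitions of $[n]$ into blocks of sizes $m-\ell,m,m+\ell$: such averages only give $\sum_i(\pi_1x_i+\pi_2y_i+\pi_3z_i)\le k-1$ for a probability vector $\pi$ realisable as block-size frequencies, and since every admissible partition has exactly one more $(m-\ell)$-block than $(m+\ell)$-block, $\pi$ can never be proportional to $(1,1,k-2)$ when $k\ge 3$. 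The resolution I would pursue is to first normalise the families by shifting/compression—provided (this must be checked) shifting preserves cross-partition-freeness and the three layer densities—so that disjointness of blocks is governed by their sizes alone and the disjoint-pair counts collapse to their independent values; one then verifies the inequality on the resulting canonical families, where the extremal configuration ${[n]\choose m}\cup{[n]\choose m+\ell}$ should emerge and the induction close with the correct weights.
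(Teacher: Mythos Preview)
Your diagnosis of the obstruction is exactly right, but the proposed fix does not close the gap. You observe that any convex combination of Kleitman-type partition inequalities produces a weight vector $(\pi_1,\pi_2,\pi_3)$ with $\pi_1-\pi_3=1/k$, whereas the target $(1,1,k-2)/k$ has $\pi_1-\pi_3=(3-k)/k$; for $k\ge 3$ these disagree, so no averaging over genuine partitions of $[n]$ into blocks of sizes $m-\ell,m,m+\ell$ can yield \eqref{eq:3}. Your inductive scheme inherits precisely this defect: deleting a block $B\in\mathcal F_i^{(m)}$ and averaging the $(k-1)$-family inequality over such $B$ is itself a (weighted) partition average, and the bookkeeping you sketch does not manufacture the missing weight on the $(m+\ell)$-layer.

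The shifting idea does not rescue this. Left-compression preserves the layer sizes and (one can check) cross-partition-freeness, but it does \emph{not} make ``disjointness governed by size alone'': a shifted family is still far from being a threshold family, and the counts $|\{(B,F):B\in\mathcal F_i^{(m)},\,F\in\mathcal F_j^{(s)},\,B\cap F=\emptyset\}|$ do not reduce to products of densities. So the sentence ``the disjoint-pair counts collapse to their independent values'' is where the argument actually breaks; without it, you are back to the linear-programming barrier you yourself identified. There is also a boundary issue you do not address: if some $\mathcal F_i^{(m)}$ is empty there is no block to delete, and the induction does not start.

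The paper circumvents the barrier not by induction but by Katona's cyclic permutation method, and the crucial point is that the $(m+\ell)$-sets it uses are \emph{not} arcs (blocks of a partition) but unions of two arcs, $C_r(j)=B_r\cup D_{r+j}$ for $1\le j\le k-2$. These $k-2$ distinct ``fake'' $(m+\ell)$-sets attached to each position are what generate the coefficient $k-2$; they participate in partitions of the form $C_r(j)\cup A_{r+j}\cup(\text{other }B\text{'s})$ that are invisible to any scheme restricted to honest block partitions. The combinatorial heart is then a local lemma (Lemma~\ref{lem:3.1}) showing that among a carefully chosen window of $A$-, $B$-, and $C$-sets at least the right number must be missing, after which the usual averaging over cyclic permutations gives \eqref{eq:3}. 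If you want to salvage an inductive proof, you would need an analogue of these two-arc sets inside the induction, not a shifting reduction.
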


Note that for $k = 3$ and $\mathcal F_1 = \ldots = \mathcal F_k$ the inequality \eqref{eq:3} implies \eqref{eq:2}.
The reason that we treat it separately is that both the statement and the proof are simple and hopefully give the reader the motivation to go through the more technical result~\eqref{eq:3}.

The proofs of \eqref{eq:2} and \eqref{eq:3} are based on Katona's cyclic permutation method (cf.\ \cite{Ka1}, \cite{Ka2}).

\numberwithin{lemma}{section}
\numberwithin{equation}{section}
\setcounter{equation}{0}

\section{The proof of \eqref{eq:2}}
\label{sec:1}

Let $x_1, x_2, \dots, x_{3m - \ell}, x_1$ be a random cyclic permutation of $\{1,2,\dots, n\}$ (as indicated above, the element after $x_n$ is $x_1$).
All $(n - 1)!$ cyclic permutations have the same probability $1/(n - 1)!$.
Set $d = (3m - \ell, m)$.

We define three families, $\mathcal B$, $\mathcal A$ and $\mathcal C$.
$\mathcal B = \bigl\{B_1, \dots, B_{(3m - \ell)/d}\bigr\}$  where $B_r = \{x_j : (r - 1) m < j \leq rm\}$,
$r = 1, \dots, (3m - \ell)/d$.
Note that the $B_r$ are arcs of $m$ consecutive elements~$x_j$.
Moreover, $(m, 3m - \ell) = d$ guarantees that each of the $(3m - \ell)/d$ arcs $B_r$ are distinct and the last element of $B_{n/d}$ is $x_n$.

Let us partition each $B_r$ as $B_r = A_r \cup D_r$ with $A_r$ being the arc consisting of the first $m - \ell$ elements.
Formally, $A_r = \{x_j : (r - 1) m < j \leq rm - \ell\}$, $D_r = B_r \setminus A_r$.
Set $C_r = B_r \cup D_{r + 1}$.
Define
\begin{align*}
\mathcal A &= \{A_r: \ 1 \leq r \leq n/d\},\\
\mathcal C &= \{C_r :\ 1 \leq r \leq n/d\}.
\end{align*}

Note that $C_r$ is not an arc but the union of two arcs and that it has the important property $C_r \cup A_{r + 1} = B_r \cup B_{r + 1}$ that we are going to use without further reference.

\begin{lemma}
\label{lem:2.1}
If $\mathcal F \subset 2^{[n]}$ is $3$-partition-free then
\beq
\label{eq:2.1}
|\mathcal F \cap \mathcal A| + |\mathcal F \cap \mathcal B| + |\mathcal F \cap \mathcal C| \leq 2n/d.
\eeq
\end{lemma}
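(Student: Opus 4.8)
The plan is to first pin down exactly which triples of members of $\mathcal A\cup\mathcal B\cup\mathcal C$ partition $[n]$, and then run a purely combinatorial count on the cyclic index set. Since $|A_r|=m-\ell$, $|B_r|=m$, $|C_r|=m+\ell$ and $n=3m-\ell$, a triple of these sets whose sizes sum to $n$ can only follow the size pattern $(m,m,m-\ell)$ or $(m+\ell,m-\ell,m-\ell)$. Writing $N:=n/d$ and reading all indices modulo $N$, I would check, using that $B_r,B_{r+1}$ are adjacent arcs and the stated identity $C_r\cup A_{r+1}=B_r\cup B_{r+1}$, that the partitions of the first kind are exactly $\{B_r,B_{r+1},A_{r+2}\}$ and those of the second kind are exactly $\{C_r,A_{r+1},A_{r+2}\}$, one of each for every $r$. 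This step is just arc bookkeeping; the only care needed is to argue that the complement of $B_r\cup B_{r+1}$ is a single $A$-arc, and the complement of $C_r$ a disjoint pair of $A$-arcs, so that no further partitions occur.

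Introducing indicators $a_r,b_r,c_r\in\{0,1\}$ for the events $A_r,B_r,C_r\in\mathcal F$, the hypothesis that $\mathcal F$ is $3$-partition-free says precisely that $b_r+b_{r+1}+a_{r+2}\le 2$ and $c_r+a_{r+1}+a_{r+2}\le 2$ for every $r$, and \eqref{eq:2.1} becomes $\sum_r(a_r+b_r+c_r)\le 2N$. The tempting move is to sum each family of constraints over $r$, giving $2\sum_r b_r+\sum_r a_r\le 2N$ and $2\sum_r a_r+\sum_r c_r\le 2N$; but these aggregate inequalities only force the total to be at most $\tfrac94N$ (the fractional point $a_r=\tfrac12,\,b_r=\tfrac34,\,c_r=1$ saturates them), so no averaging over all the partitions can succeed. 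The main obstacle is exactly this: the bound $2N$ is a genuinely \emph{integral} phenomenon, and any proof must exploit the positions of the present $A$-arcs, not merely their number.

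To overcome this I would fix the support $\alpha=\{r:a_r=1\}$, set $A=|\alpha|$, and bound the $B$- and $C$-contributions separately in terms of $\alpha$. For the $C$'s, the constraint $c_r+a_{r+1}+a_{r+2}\le 2$ forces $c_r=0$ whenever $\{r+1,r+2\}\subseteq\alpha$; since distinct adjacent pairs of $\alpha$ kill distinct $c_r$, one gets $\sum_r c_r\le N-P$, where $P$ is the number of cyclically adjacent pairs inside $\alpha$. For the $B$'s, only the constraints with $a_{r+2}=1$ bind, and each binding constraint $b_r+b_{r+1}\le 1$ is an edge of the cycle $\mathbb Z/N$; hence the support of $b$ must be independent in the subgraph $H$ whose edge set is $\{\{r,r+1\}:r\in\alpha-2\}$, so $\sum_r b_r$ is at most the independence number of $H$.

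Decomposing $\alpha$ into its $g$ maximal runs of sizes $s_1,\dots,s_g$, I would record $P=A-g$ and, since a run of size $s_j$ contributes a path on $s_j+1$ vertices to $H$, that the independence number equals $N-A+\sum_j\lfloor s_j/2\rfloor$. Adding the three bounds then gives $\sum_r(a_r+b_r+c_r)\le 2N+g-\sum_j\lceil s_j/2\rceil\le 2N$, the final step holding because each of the $g$ terms $\lceil s_j/2\rceil$ is at least $1$. I would close by treating the degenerate cases $\alpha=\emptyset$ and $\alpha=\mathbb Z/N$ (where $P=N$) directly, since the identity $P=A-g$ assumes $\alpha$ is a proper nonempty subset of the cycle. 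The whole difficulty is concentrated in the passage from the loose aggregate inequalities to these position-sensitive, integral bounds on the $B$'s and $C$'s.
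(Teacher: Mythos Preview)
Your argument is correct. You correctly identify the two families of partitioning triples $\{B_r,B_{r+1},A_{r+2}\}$ and $\{C_r,A_{r+1},A_{r+2}\}$, and your run-decomposition count goes through: with $A=|\alpha|$, $P=A-g$, and independence number $N-A+\sum_j\lfloor s_j/2\rfloor$ for the constraint graph on the $b$'s, the total comes out to at most $2N+g-\sum_j\lceil s_j/2\rceil\le 2N$. The edge cases $\alpha=\emptyset$ and $\alpha=\mathbb Z/N$ are easy, as you note. (Incidentally, your ``exactly'' claim about the partitions, while true, is not needed: you only use that the listed triples \emph{are} partitions.)

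The paper's proof is shorter and takes a different route. It rewrites \eqref{eq:2.1} as $|R|\le|S|+|T|$, where $R=\{r:A_r\in\mathcal F\}$, $S=\{s:B_s\notin\mathcal F\}$, $T=\{t:C_t\notin\mathcal F\}$, and then builds an explicit injection $\varphi:R\to S\cup T$: for $r+2\in R$ the partition $B_r\cup B_{r+1}\cup A_{r+2}$ forces $r\in S$ or $r+1\in S$; map $r+2$ to $r+1$ if $r+1\in S$, otherwise provisionally to $r$; the only possible collision is $\varphi(r+1)=\varphi(r+2)=r$, and then $A_{r+1},A_{r+2}\in\mathcal F$ forces $C_r\notin\mathcal F$ via the second partition, so one redirects $\varphi(r+2)$ to $r\in T$. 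This bypasses the run decomposition entirely. Your approach, on the other hand, makes the integrality obstruction you diagnosed (the $\tfrac94N$ fractional bound) completely explicit and quantifies the slack as $\sum_j\lceil s_j/2\rceil-g$, which is a nice piece of extra information. Both proofs ultimately exploit the same local phenomenon---two consecutive $A$'s in $\mathcal F$ kill a $C$---but the paper encodes it as a collision-resolution rule while you encode it as the adjacent-pair count $P$.
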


\begin{proof}[Proof of \eqref{eq:2.1}]
Let $R = \{r : A_r \in \mathcal F\}$, $S = \{s : B_s \notin \mathcal F\}$, $T = \{t : C_t \notin \mathcal F\}$.
(We consider $S$ and $T$ as sets on distinct ground sets.)
To prove \eqref{eq:2.1} it is sufficient to show
\beq
\label{eq:2.2}
|R| \leq |S| + |T|.
\eeq
We prove \eqref{eq:2.2} by constructing an injection $\varphi$ from $R$ into $S \cup T$.

First note that $B_r, B_{r + 1}, A_{r + 2}$ form a partition of $[n]$.
This implies that if $r + 2 \in R$ then at least one of $r, r + 1$ is in $S$.
If $r + 1 \in S$, we set $\varphi(r + 2) = r + 1$.
If not then we let provisionally $\varphi(r + 2) = r$.

The only problem that might occur is that $r + 1$ is also in $R$ and therefore $\varphi(r + 2) = \varphi(r + 1) = r$.

Noting that $C_r, A_{r + 1}, A_{r + 2}$ form a partition of $[n]$, $r \in T$ follows.
We change the value of $\varphi(r + 2)$ to the element $r$ in~$T$.
This element is not allocated to any other $r' \in R$ and the proof of \eqref{eq:2.2} is complete.

To deduce \eqref{eq:2} from \eqref{eq:2.1} is easy averaging.
For every $B \in \mathcal B$ the probability of $B \in \mathcal F$ is $|\mathcal F^{(m)}|/{n\choose m}$ by the uniform random choice of the permutation.
The expected size $E(|\mathcal F \cap \mathcal B|)$ is
\[
|\mathcal B|\, |\mathcal F^{(m)}| \Bigm/{n\choose m} = \frac{|\mathcal F^{(m)}|}{{n\choose m}} \cdot \frac{n}{d}.
\]
The same holds for $\mathcal A$ and $\mathcal C$ as well.
By linearity of expectation and using the trivial fact that the expectation never exceeds the maximum, we infer
\[
\frac{n}{d} \left(\frac{|\mathcal F^{(m - \ell)}|}{{n\choose m - \ell}} + \frac{|\mathcal F^{(m)}|}{{n\choose m}} + \frac{|\mathcal F^{(m + \ell)}|}{{n\choose m + \ell}}\right) = E(|\mathcal F \cap \mathcal A| + |\mathcal F \cap \mathcal B| + |\mathcal F \cap \mathcal C|) \leq \frac{2n}{d}.
\]
Dividing by $\frac{n}{d}$ yields \eqref{eq:2}.
\end{proof}

\section{The proof of \eqref{eq:3}}
\label{sec:3}

The proof is similar to that of \eqref{eq:2} but both notationally and conceptually more complicated.
Set $d = (km - \ell, m)$ and $\overline n = n/d$.
Fix a random cyclic permutation $x_1, \dots, x_n$ of $\{1, \dots, n\}$ and define again the $\overline n$ arcs of length $m$,
$\mathcal B = \{B_1, \dots, B_{\overline n}\}$ where $B_r = \bigl\{x_q : (r - 1) m < q \leq rm\bigr\}$.
The choice of $\overline n$ guarantees that $B_{\overline n}$ ends with the element $x_n$.
This time we want to distribute these arcs among the $k$ families $\mathcal F_i$, $1 \leq i \leq k$.
For this reason let $b$ be the first positive integer such that $k$ divides $b\overline n$.
Of course, $b = k/(\overline n, k)$.

We let $B_r^{(p)}$ be a copy of $B_r$ and make a circle of $b\overline n$ sets in the following order:
$B_1^{(1)}, B_2^{(2)}, \dots, B_k^{(k)}, B_{k + 1}^{(1)}, \dots, B_{b\overline n}^{(k)}$.
For each pair $(r, p)$ we define the arc $A_r^{(p)} \subset B_r^{(p)}$ as the set of the first $m - \ell$ elements of $B_r^{(p)}$ and let $D_r^{(p)}$ be the rest: $D_r^{(p)} = B_r^{(p)} \setminus A_r^{(p)}$.
For $1 \leq j \leq k - 2$ we define the $(m + \ell)$-element sets
\[
C_r^{(p)}(j) = B_r^{(p)} \cup D_{r + j}^{(p + j)} \ \ \ \ (r + j \ \text{ is \ mod }\overline n, \ p + j \ \text{ is \ mod }k).
\]
Note that $C_r^{(p)}(j) \cup A_{r + j}^{(p + j)} = B_r^{(p)} \cup B_{r + j}^{(p + j)}$.
For $1 \leq p \leq k$ let us define $\mathcal B^{(p)} = \bigl\{B_1^{(p)}, \dots, B_{\overline n}^{(p)}\bigr\}$,
$\mathcal A^{(p)} = \bigl\{A_1^{(p)}, \dots, A_{\overline n}^{(p)}\bigr\}$ and
$\mathcal C_j^{(p)} = \bigl\{C_r^{(p)}(j): \ 1 \leq r \leq \overline n\bigr\}$, $1 \leq j \leq k - 2$.
Note that altogether we defined $\bigl(1 + 1 + (k - 2)\bigr)k = k^2$ families, each of size $b\overline n/k$.
Therefore \eqref{eq:3} will follow once we prove that out of these altogether $b\overline n k$ sets at most
$b\overline n(k - 1)$ are in the corresponding families~$\mathcal F_i$.
In other words we have to show that at least $b\overline n$ in total are missing.

Our plan is very simple.
Fixing an arbitrary pair $(i, r)$, $1 \leq i \leq k$, $1 \leq r \leq \overline n$, we want to show that there is an integer, $0 < t \leq k$ such that out of the following $tk$ sets at least $t$ are missing from the corresponding~$\mathcal F_{i'}$.

The list is $A_r^{(i)}, A_{r - 1}^{(i - 1)}, ..., A_{r - t + 1}^{(i - t + 1)}$; $B_{r - 1}^{(i - 1)}, ..., B_{r - t}^{(i - t)}$;
$C_{r - 1}^{(i - 1)}(j), ..., C_{r - t}^{(i - t)}(j)$, $1 \leq j \leq k - 2$.

To achieve this goal we prove a slightly stronger assertion.
Since we do not need them for this statement, we remove the upper indices and let $C_r(j)$ denote the set $C_r^{(i)}(j)$ and the same with $B_r^{(i)}$, $A_r^{(i)}$.

\begin{lemma}
\label{lem:3.1}
Let $r$ be fixed and consider the following $k$ groups of sets.
$\mathcal G_1 = \{A_r, B_{r - 1}\}$, $\mathcal G_2 = \{A_{r - 1}, B_{r - 2}, C_{r - 2}(1)\}$, \dots, $\mathcal G_{k - 1} = \{A_{r - k + 2}, B_{r - k + 1},$\break
$C_{r - k + 1}(1), \dots, C_{r - k + 1}(k - 2)\}$.
Suppose that we have families $\mathcal H_i$, $\mathcal H_i \subset \mathcal G_i$, $1 \leq i < k$ such that we cannot find $k$ members of $\mathcal H_1 \cup \ldots \cup \mathcal H_{k - 1}$ which partition $A_r \cup B_{r - 1} \cup \ldots \cup B_{r - k + 1}$.
Then there exists $t$, $1 \leq t \leq k$ satisfying
\beq
\label{eq:3.1}
\sum_{1 \leq s \leq t} \bigl|\mathcal G_s \setminus \mathcal H_s\bigr| \geq t.
\eeq
\end{lemma}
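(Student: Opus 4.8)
The plan is to prove the contrapositive. Assume that $\sum_{1\le s\le t}\bigl|\mathcal G_s\setminus\mathcal H_s\bigr|\le t-1$ for every $t$ with $1\le t\le k-1$ (this follows from the negation of \eqref{eq:3.1}, using integrality), and produce $k$ members of $\mathcal H_1\cup\dots\cup\mathcal H_{k-1}$ that partition $U:=A_r\cup B_{r-1}\cup\dots\cup B_{r-k+1}$. First I record two facts. Since $|U|=(m-\ell)+(k-1)m=km-\ell=n$ and the listed sets form a single arc of consecutive elements of the cyclic order, $U=[n]$. Second, the $m-\ell$ elements of $A_r$ are met by no $B$-set, $C$-set, or other $A$-set occurring in the groups, so $A_r$ must lie in every partition of $U$ by group-sets; and the case $t=1$ forces $\mathcal G_1\subseteq\mathcal H_1$, so the required copy of $A_r\in\mathcal G_1$ is present.

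The conceptual core is a description of all partitions of $U$ by group-sets. Label the $k$ length-$m$ arcs cyclically as blocks $0,\dots,k-1$, writing $\widehat B_i=B_{r-k+1+i}$ for block $i$, with head $\widehat A_i$ and tail $\widehat D_i$, so that $A_r=\widehat A_{k-1}$ and $\widehat D_{k-1}$ coincides with the first $\ell$ elements of block $0$. I claim that the partitions correspond bijectively to partial matchings on the vertex set $\{0,1,\dots,k-2\}$: an edge $\{i,i'\}$ with $i<i'$ means one uses $\widehat B_i\cup\widehat D_{i'}$ (a $C$-set covering block $i$ and the tail of block $i'$) together with the head $\widehat A_{i'}$, an unmatched block $i$ is covered by its whole arc $\widehat B_i$, and $A_r$ is used throughout. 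The verification amounts to checking that a block can be neither the source of two edges nor simultaneously a source and a target (either would doubly cover a tail), that block $0$ is never a target (being the smallest index), and that conversely every partial matching, crossings allowed, does tile $U$ into exactly $k$ parts. Under this dictionary each used set sits in a determined group: $\widehat B_i$ and the $C$-sets with body $i$ live in $\mathcal G_{k-1-i}$, whereas the head $\widehat A_{i'}$ spent on a target $i'$ lives in $\mathcal G_{k-i'}$, one group over.

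It then remains to build a partial matching all of whose prescribed sets are present in the $\mathcal H$'s. The blocks $i$ with $\widehat B_i$ absent are precisely those that must be matched, and I would construct the matching greedily, scanning $i$ from $k-2$ down to $0$ while maintaining the set of already-opened targets still awaiting a smaller-indexed source. Whenever a step asks for a particular $C$-set $\widehat B_i\cup\widehat D_{i'}$ or a particular head $\widehat A_{i'}$, the hypothesis $\sum_{s\le t}|\mathcal G_s\setminus\mathcal H_s|\le t-1$ bounds how many sets can be missing among the top $t$ groups, and I would use this to show that an admissible present set always remains.

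The step I expect to be the main obstacle is the conflict resolution, which already appears in miniature in the proof of \eqref{eq:2.1}: there the provisional value $\varphi(r+2)=r$ could collide with $\varphi(r+1)=r$ and was repaired via the second partition $C_r,A_{r+1},A_{r+2}$. Here the analogue is that opening a block as a target spends a head $\widehat A_{i'}$ stored one group to the \emph{left} of that block, so greedy choices interact across groups and repairs can cascade. Making this index shift precise, and verifying that the partial-sum hypothesis is exactly the Hall-type condition guaranteeing a matching that covers every forced block without ever demanding a missing set, is where the real work of the proof lies.
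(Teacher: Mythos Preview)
Your partial–matching reformulation is correct and is essentially the same structural observation the paper uses: partitions of $U$ by group-sets are exactly $A_r$ together with a partial matching on blocks $\{0,\dots,k-2\}$, with $\widehat B_i$ and the $C$-sets of body $i$ sitting in $\mathcal G_{k-1-i}$ and the head $\widehat A_{i'}$ sitting one group over in $\mathcal G_{k-i'}$. So the framework is right.

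The proposal is, however, not a proof: you explicitly stop at the point ``where the real work of the proof lies.'' The greedy scan from $i=k-2$ down to $0$ is not obviously well-defined, for exactly the reason you flag—declaring block $i$ a target consumes $\widehat A_i\in\mathcal G_{k-i}$, one group \emph{ahead} of the group $\mathcal G_{k-1-i}$ you are currently processing, while declaring it a source consumes a $C$-set in the current group but also needs a previously opened target whose head was paid for earlier. So decisions bleed across group boundaries in both directions, and you have not shown that the prefix-sum hypothesis $\sum_{s\le t}|\mathcal G_s\setminus\mathcal H_s|\le t-1$ is precisely the invariant that keeps the greedy from getting stuck. That verification is the entire content of the lemma.

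The paper takes the dual route and thereby sidesteps the construction. Instead of building the partition under the prefix-sum hypothesis, it keeps the lemma's hypothesis (no partition of $U$) and lets $t$ be the \emph{least} index for which the prefix $A_r\cup B_{r-1}\cup\dots\cup B_{r-t}$ has no partition from $\mathcal H_1\cup\dots\cup\mathcal H_t$. Minimality supplies an actual partition of the length-$(t-1)$ prefix, whose structure is exactly your matching picture on blocks $k-t,\dots,k-2$: some blocks isolated (covered by a single $B_u$), the rest paired (covered by $C_v(w-v)\cup A_w$). Now one reads off missing sets directly: $B_{r-t}\notin\mathcal H_t$ since adjoining it would extend the partition; for each isolated $B_u$ one rewrites $B_{r-t}\cup B_u=C_{r-t}(u-r+t)\cup A_u$, so one of those two is missing; for each pair $B_v\cup B_w$ one has two alternative three-set partitions of $B_{r-t}\cup B_v\cup B_w$, each forcing a missing set. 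This yields $t$ distinct missing sets inside $\mathcal G_1\cup\dots\cup\mathcal G_t$, i.e.\ \eqref{eq:3.1} for that $t$. The point is that the obstruction-finding direction needs only \emph{one} prefix partition (handed to you by minimality) rather than a construction that must succeed against every configuration of missing sets; that is why the paper's argument closes and yours, as written, does not yet.
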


\begin{proof}
First consider $\mathcal H_1$.
If $\mathcal H_1 \subsetneqq \mathcal G_1$ then \eqref{eq:3.1} holds with $t = 1$.
If $\mathcal H_1 = \mathcal G_1$ then the two members $A_r$ and $B_{r - 1}$ partition $A_r \cup B_{r - 1}$.
Arguing indirectly, suppose that \eqref{eq:3.1} does not hold and let $1 \leq t < k$ be the smallest integer such that
$A_r \cup B_{r - 1} \cup \ldots \cup B_{r - t}$ cannot be partitioned using the sets in $\mathcal H_1 \cup \ldots \cup \mathcal H_t$.

By our assumptions $t$ exists and the above considerations show $t > 1$ and $A_r \in \mathcal H_1$.
The minimality of $t$ implies the existence of members $H_i \in \mathcal H_i$, $1 \leq i < t$ such that
\[
A_r \cup H_1 \cup \ldots \cup H_{t - 1} = A_r \cup B_{r - 1} \cup \ldots \cup B_{r - (t - 1)}
\]
is a partition with $H_i \in \mathcal H_i$.
To conclude the proof we will prove that \eqref{eq:3.1} holds for~$t$.

First note that adding $B_{r - t}$ would make a partition of $A_r \cup B_{r - 1} \cup \ldots \cup B_{r - t}$, implying $B_{r - t} \notin \mathcal H_t$.
To exhibit $t - 1$ further missing sets let us note the following important feature about the partition $H_1 \cup \ldots \cup H_{t - 1} = B_{r - 1} \cup \ldots \cup B_{r - (t - 1)}$:
whenever a set $C_{r - s}(j)$ occurs it must come together with $A_{r - s + j}$ and the union of these two sets is $B_{r - s} \cup B_{r - s + j}$.
Consequently, altering the order of the $H_u$, we can break up the partition as
\[
B_{r - 1} \cup \ldots \cup B_{r - (t - 1)} = B_{u_1} \cup \ldots \cup B_{u_\ell} \cup \bigl(B_{u_{\ell + 1}} \cup B_{u_{\ell + 2}}\bigr) \cup \ldots \cup \bigl(B_{u_{t - 2}} \cup B_{u_{t - 1}}\bigr).
\]
To prove \eqref{eq:3.1} we show the existence of \emph{distinct} sets in $\bigcup\limits_{1 \leq s \leq t} \mathcal G_s \setminus \mathcal H_s$, one for $B_{u_i}$ and two for $B_{u_i} \cup B_{u_{i + 1}}$.

For $B_{u_i}$ note $C_{r - t}(u_i - r + t) \cup A_{u_i} = B_{r - t} \cup B_{u_i}$.
Since $A_r \cup B_{r - 1} \cup \ldots \cup B_{r - t}$ \emph{cannot} be partitional by members of the $\mathcal H_s$,
either $C_{r - t}(u_i - r + t)$ or $A_{u_i}$ is missing from the $\mathcal H_s$.
For the case of $B_v \cup B_w$ (to simplify notation, $r - t < v < w \leq r - 1$) first note that one of the corresponding sets in $\mathcal H_1 \cup \ldots \cup \mathcal H_{t - 1}$
that partition $B_v \cup B_w$
 is $A_w$ (the other is $C_v(w - v)$).
Consider two partitions of $B_{r - t} \cup B_v \cup B_w$.
\begin{align*}
C_{r - t}(w - r + t) &\cup B_v \cup A_w \ \ \text{ and }\\
C_{r - t}(v - r + t) &\cup A_v \cup B_w.
\end{align*}
Since at least one set must be missing from both, we are done.
Noting that the exhibited candidates for missing sets are all distinct, the proof of \eqref{eq:3.1} is complete.\hfill$\square$

Equipped with \eqref{eq:3.1} it is not hard to prove Lemma~\ref{lem:3.1}.
Starting at an arbitrary $r$ we find, say, $t_1$ consecutive ``groups'' with at least a total of $t_1$ missing sets,
$1 \leq t_1 \leq k$.
Then starting at $r - t_1$ we find $t_2$ such groups, etc.
Going around the circle (of length $b\overline n$) the last position of the last group might not be $r + 1$.
However, since there are only $b\overline n$ members after making no more than $k$ full rounds we definitely have two sets of groups starting at the same element, say $r'$.
That is for the $t_w$ in between, say $t_a, t_{a + 1}, \dots, t_{a + q}$ one has
$t_a + t_{a + 1} + \ldots + t_{a + q} = c \cdot b\overline n$ with $c$ a positive integer.
For these positions we exhibited altogether at least $cb\overline n$ missing sets and each of them is counted at most $c$ times.
Therefore there are at least $b \overline n$ missing sets, proving Lemma~\ref{lem:3.1}.
\end{proof}

Since Lemma~\ref{lem:3.1} implies \eqref{eq:3} by the same averaging argument as Lemma~\ref{lem:2.1} implied \eqref{eq:2}, the proof of Theorem~\ref{th:3} is complete.

\numberwithin{definition}{section}
%\numberwithin{theorem}{section}
\section{Applications}
\label{sec:4}

\begin{definition}
\label{def:4.1}
For positive integers $n \geq k \geq 3$ let $p(n, k)$ denote the maximum of $|\mathcal F|$ over all $\mathcal F \subset 2^{[n]}$ that are $k$-partition-free.
\end{definition}

\begin{theorem}
\label{th:4.2}
\beq
\label{eq:4.1}
p(km - 1, k) = \sum_{j \geq m} {km - 1\choose j},
\eeq
moreover the only $k$-partition-free family achieving equality in \eqref{eq:4.1} is $\bigl\{G \subset [km - 1] : |G| \geq m\bigr\}$.
\end{theorem}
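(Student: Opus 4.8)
The plan is to prove the two bounds of \eqref{eq:4.1} separately and then establish uniqueness. For the lower bound I would simply exhibit the family $\mathcal G = \{G \subseteq [km-1] : |G| \ge m\}$. Any $k$ pairwise disjoint members of $\mathcal G$ would have total size at least $km > km - 1 = n$, so they cannot even fit inside $[n]$, let alone partition it; hence $\mathcal G$ is $k$-partition-free and $|\mathcal G| = \sum_{j \ge m}\binom{km-1}{j}$, which gives the inequality ``$\ge$'' in \eqref{eq:4.1}.

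For the upper bound, write $x_j = f^{(j)}/\binom nj \in [0,1]$. Since $\sum_{j\ge m} f^{(j)} \le \sum_{j \ge m}\binom nj$ holds termwise, it suffices to show that the mass carried by the low levels is paid for by the deficit on the high levels, i.e.
\[
\sum_{a=0}^{m-1} x_a \binom na \;\le\; \sum_{j=m}^{n} (1 - x_j)\binom nj .
\]
The basic tool is the Kleitman Lemma. For every $a \le m-1$ one has $n - a = km - 1 - a \ge (k-1)m$, so $n-a$ can be written as a sum $j_2 + \dots + j_k$ of $k-1$ integers each at least $m$; applying \eqref{eq:1} to the partition $(a, j_2, \dots, j_k)$ of $[n]$ yields $x_a \le \sum_{i=2}^k (1 - x_{j_i})$. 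Thus each low level is dominated by a sum of high-level deficits.

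The core of the argument is then to aggregate these pointwise inequalities with the correct binomial weights. I would set this up as a fractional transportation problem: for each $a$ one chooses a distribution over the admissible compositions of $n-a$ and ships the mass $\binom na$ to the high levels that composition uses, arranging that the total demand arriving at level $j$ never exceeds the capacity $\binom nj$. Because any admissible part lies in the narrow window $[m,\,2m-1]$, the demand concentrates near level $m$, and a naive routing overloads the central levels; this is precisely where I expect the sharpened inequality \eqref{eq:3} (with $\ell = 1$), namely $x_{m-1} + x_m + (k-2)x_{m+1} \le k-1$, to be indispensable, since it controls exactly the congested triple $m-1,m,m+1$ with better coefficients than \eqref{eq:1} alone. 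Verifying that capacity dominates demand reduces to a Hall-type inequality among the coefficients $\binom{km-1}{j}$, and this feasibility check, together with the correct interleaving of \eqref{eq:1} and \eqref{eq:3}, is the main obstacle.

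Finally, for uniqueness I would analyse the equality case of the chain above. If $|\mathcal F|$ attains the bound then $\sum_{a<m} x_a\binom na = \sum_{j\ge m}(1-x_j)\binom nj$, and since the transportation only charges the levels $j \in [m,\,2m-1]$, the remaining high levels $j \ge 2m$ must already be full, $x_j = 1$. Tightness of the charged inequalities then forces $x_a = 0$ for $a < m$ and $x_j = 1$ for $m \le j \le 2m-1$, so that $\mathcal F = \{G : |G| \ge m\}$. The delicate point I would treat last is guaranteeing that the aggregation assigns strictly positive weight to every charged level, so that tightness of the aggregate sum genuinely propagates to each individual application of \eqref{eq:1} and \eqref{eq:3}.
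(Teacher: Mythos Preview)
Your plan is exactly the paper's approach: one application of \eqref{eq:3} with $\ell=1$ to handle level $m-1$, together with applications of the Kleitman Lemma \eqref{eq:1} for the remaining low levels, aggregated with binomial weights so that the total demand at each high level stays strictly below $\binom nj$. The paper resolves your self-declared ``main obstacle'' not by an abstract Hall/transportation argument but by a single explicit routing: for $a=m-\ell$ with $2\le\ell\le m$ it uses the composition $(m-\ell,\,m,\dots,m,\,m+\ell-1)$ in \eqref{eq:1} and multiplies by $\binom{n}{m-\ell}$; for $a=m-1$ it uses \eqref{eq:3}. A short direct computation (using $\binom{n}{m}=(k-1)\binom{n}{m-1}$, the cruder bound $\binom{n}{m-\ell}<(k-1)^{1-\ell}\binom{n}{m-1}$, and $\binom{n}{m+i}\ge\binom{n}{m}$) then shows that the resulting coefficient of $(1-x_j)$ is strictly less than $\binom nj$ for every $j\ge m$, which is precisely the capacity check you were after.

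One small correction to your uniqueness sketch: the condition you need is not that every charged inequality receives \emph{positive} weight, but that every high level $j\ge m$ receives weight \emph{strictly less} than $\binom nj$. That strict slack is what forces $x_j=1$ for all $j\ge m$ in the equality case; once that is known, plugging back into any of the used inequalities \eqref{eq:1} or \eqref{eq:3} immediately gives $x_a=0$ for each $a<m$.
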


Let us note that Kleitman \cite{Kl} proved the same bound for the somewhat stronger restriction that the family is without $k$ pairwise disjoint sets.
Also, Kleitman did not prove the uniqueness of the optimal family.

\begin{proof}
Since the case $m = 1$ is trivial, we suppose $m \geq 2$.
Our main tool is Theorem~\ref{th:3} applied with $\ell = 1$, $\mathcal F_1 = \ldots = \mathcal F_k \overset{\text{\rm def}}{=} \mathcal F$.
For the $k$-partition-free family $\mathcal F \subset 2^{[n]}$, $n = km - 1$ we get from \eqref{eq:3}:
\[
%%\label{eq:4.2}
\frac{|\mathcal F^{(m - 1)}|}{{n\choose m - 1}} + \frac{|\mathcal F^{(m)}|}{{n\choose m}} + (k - 2)
\frac{|\mathcal F^{(m + 1)}|}{{n\choose m + 1}} \leq k - 1.
\]
Setting $y(j) = {n\choose j} - |\mathcal F^{(j)}|$ we obtain
\beq
\label{eq:4.3}
\frac{y(m - 1)}{{n\choose m - 1}} + \frac{y(m)}{{n\choose m}} + (k - 2) \frac{y(m + 1)}{{n\choose m + 1}} \geq 1.
\eeq
Note ${km - 1\choose m} = (k - 1){km - 1\choose m - 1}$ and for further use
\beq
\label{eq:uj4.3}
{km - 1\choose m - j + 1} \Bigm/ {km - 1\choose m - j} = \frac{(k - 1)m + j - 1}{m - j + 1} > k - 1 \ \text{ for } \ j \geq 2.
\eeq
Using ${km - 1\choose m + i} \geq {km - 1\choose m}$ (valid for $i < m$) \eqref{eq:4.3} yields the following inequality.
\beq
\label{eq:4.4}
y(m - 1) + \frac{1}{k - 1} y(m) + \frac{k - 2}{k - 1} y(m + 1) \geq {km - 1\choose m - 1}.
\eeq
Let us apply \eqref{eq:1} with
\[
(j_1, \dots, j_k) = (m - \ell, m, m, \dots, m, m + \ell - 1) \ \text{ for } \ \ell = 2,3, \dots, m.
\]
Multiplying both sides by ${n\choose m - \ell}$ we obtain
\beq
\label{eq:4.5}
y(m - \ell) + \frac{(k - 2)}{(k - 1)^\ell} y(m) + \frac{1}{(k - 1)^\ell} y(m + \ell - 1) \geq {km - 1\choose m - \ell}
\eeq
$\left(\text{we used \eqref{eq:uj4.3} and } {n\choose m + \ell - 1} > {n\choose m}\right)$.
We want to add \eqref{eq:4.4} and the sum of \eqref{eq:4.5} over $2 \leq \ell \leq m$.
For $\ell > 2$ the term $y(m + \ell - 1)$ occurs only once and its coefficient is smaller than $\frac{1}{k - 1} < 1$.
The term $y(m + 1)$ has coefficient
\[
\frac{k - 2}{k - 1} + \frac{1}{(k - 1)^2} < \frac{k - 2}{k - 1} + \frac{1}{k - 1} = 1 \ \text{ also.}
\]
Finally $(k - 1)^{-2} + (k - 1)^{-3} + \ldots = \frac{k - 1}{k - 2} \cdot \frac{1}{(k - 1)^2} = \frac{1}{(k - 2)} \cdot \frac{1}{k - 1}$.
Thus the total coefficient of $y(m)$ will be less than $\frac{2}{k - 1} \leq 1$.
That is, we obtain an inequality of the form
\[
y(0) + y(1) + \ldots + y(m - 1) + c_m y(m) + \ldots + c_{2m} y(2m) \geq \sum_{0 \leq j < m} {n\choose j}
\]
with $c(m + i) < 1$ for $0 \leq i \leq m$.
Consequently, $|\mathcal F| \leq 2^n - \sum\limits_{0 \leq j < m} {n\choose j} = \sum\limits_{j \geq m} {n\choose m}$, as desired.
Moreover, in case of equality, $y(m + i) = 0$ must hold because of $c_{m + i} < 1$ for all $0 \leq i \leq m$.
Plugging these values into \eqref{eq:4.4} and \eqref{eq:4.5}, $y(m - \ell) = {n\choose m - \ell}$ follows for all $1 \leq \ell \leq m$.
That is, $\mathcal F = \{F \subset [n] : |F| \geq m\}$ concluding the proof of the uniqueness.
\end{proof}

\begin{remark}
If we used \eqref{eq:1} instead of Theorem~\ref{th:3} then instead of \eqref{eq:4.4} we would have
\[
y(m - 1) + y(m) \geq {km - 1\choose m}.
\]
Thus adding more equalities to it would make the coefficient of $y(m)$ greater than~$1$.
\end{remark}

\begin{definition}
\label{def:4.3}
The not necessarily distinct families $\mathcal F_1, \dots, \mathcal F_k$ are called \emph{cross-dependent} if there is no choice of $F_1 \in \mathcal F_1, \dots, F_k \in \mathcal F_k$ that are pairwise disjoint.
\end{definition}

Let us recall the following recent result of Kupavskii and the author.

\begin{theorem}[{\cite{FK}}]
\label{th:4.4}
Suppose that $\mathcal F_1, \dots, \mathcal F_k \subset 2^{[n]}$, $n = mk - 1$, are cross-dependent.
Then one has:
\beq
\label{eq:4.6}
|\mathcal F_1| + \ldots + |\mathcal F_k| \leq k \cdot \sum_{j \geq m} {mk - 1\choose j}.
\eeq
\end{theorem}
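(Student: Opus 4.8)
The plan is to mirror the proof of Theorem~\ref{th:4.2}, but to run the whole argument for all $k$ families at once, replacing each single-family inequality by a \emph{symmetrized} cross-analogue summed over $i$. Set $n = km-1$ and write $y_i(j) = {n\choose j} - \bigl|\mathcal F_i^{(j)}\bigr|$; then \eqref{eq:4.6} is equivalent to $\sum_{i=1}^k\sum_{j=0}^n y_i(j) \ge k\sum_{0\le j<m}{n\choose j}$. The one genuinely new ingredient I would record first is the \emph{cross-Kleitman inequality}: if $\mathcal F_1,\dots,\mathcal F_k$ are cross-dependent and $j_1+\dots+j_k \le n$, then $\sum_i \bigl|\mathcal F_i^{(j_i)}\bigr|\big/{n\choose j_i}\le k-1$, i.e.\ $\sum_i y_i(j_i)\big/{n\choose j_i}\ge 1$. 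This follows by the averaging behind \eqref{eq:1}: pick a uniformly random ordered tuple of pairwise disjoint sets $(G_1,\dots,G_k)$ with $|G_i| = j_i$ (possible since $\sum j_i\le n$); each $G_i$ is uniform in ${[n]\choose j_i}$, and cross-dependence forces $\#\{i: G_i\in\mathcal F_i\}\le k-1$ for every tuple, so the expectation is at most $k-1$. Note that here, unlike in \eqref{eq:1}, we may take $\sum j_i<n$, but for the assignments below equality $\sum j_i=n$ will hold.

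Next, since cross-dependence implies cross-partition-free, I would apply Theorem~\ref{th:3} with $\ell = 1$ to the families restricted to the three levels $m-1,m,m+1$ (restriction preserves cross-dependence). Rewriting \eqref{eq:3} in terms of the $y_i$ gives $\sum_i\bigl(\tfrac{y_i(m-1)}{{n\choose m-1}}+\tfrac{y_i(m)}{{n\choose m}}+(k-2)\tfrac{y_i(m+1)}{{n\choose m+1}}\bigr)\ge k$; multiplying by ${n\choose m-1}$, using ${n\choose m}=(k-1){n\choose m-1}$ and ${n\choose m+1}\ge{n\choose m}$, and enlarging the last coefficient (legitimate since $y_i\ge0$) yields the cross-analogue of \eqref{eq:4.4}:
\[
\sum_{i=1}^k\Bigl(y_i(m-1) + \tfrac1{k-1}y_i(m) + \tfrac{k-2}{k-1}y_i(m+1)\Bigr) \ge k\,{n\choose m-1}.
\]

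For $2\le\ell\le m$ I would obtain the cross-analogue of \eqref{eq:4.5} by symmetrizing the cross-Kleitman inequality over assignments. Apply it to every one of the $k(k-1)$ ways of assigning the multiset $\{m-\ell,\,\underbrace{m,\dots,m}_{k-2},\,m+\ell-1\}$ (total exactly $n$) to the labelled families. A given family receives $m-\ell$ in $k-1$ of these assignments, $m+\ell-1$ in $k-1$ of them, and $m$ in the remaining $(k-1)(k-2)$; summing all $k(k-1)$ inequalities and dividing by $k-1$ gives $\sum_i\bigl(\tfrac{y_i(m-\ell)}{{n\choose m-\ell}} + (k-2)\tfrac{y_i(m)}{{n\choose m}} + \tfrac{y_i(m+\ell-1)}{{n\choose m+\ell-1}}\bigr)\ge k$. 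Multiplying by ${n\choose m-\ell}$ and enlarging coefficients exactly as for \eqref{eq:4.5} (via \eqref{eq:uj4.3} and ${n\choose m+\ell-1}\ge{n\choose m}$) produces
\[
\sum_{i=1}^k\Bigl(y_i(m-\ell) + \tfrac{k-2}{(k-1)^\ell}y_i(m) + \tfrac1{(k-1)^\ell}y_i(m+\ell-1)\Bigr)\ge k\,{n\choose m-\ell}.
\]

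Finally I would add the two displayed inequalities over $2\le\ell<\ell\le m$. Because each is $\sum_i(\cdots)$ with \emph{exactly} the per-family coefficients appearing in the proof of Theorem~\ref{th:4.2}, the coefficient bookkeeping is literally that computation carried inside the sum over $i$: levels $0,\dots,m-1$ acquire coefficient $1$, while each level $m+i$ ($0\le i\le m$) acquires a coefficient $c_{m+i}<1$. Using $y_i(j)\ge0$ we conclude $\sum_i\sum_j y_i(j)\ge\sum_i\sum_j c_j y_i(j)\ge k\sum_{0\le j<m}{n\choose j}$, which rearranges to \eqref{eq:4.6}. The main obstacle is the symmetrization step: the cross-Kleitman inequality is inherently asymmetric in the families, and one must check that averaging over all $k(k-1)$ assignments restores a clean symmetric inequality whose coefficients coincide with the single-family case, so that the delicate estimate $c_{m+i}<1$ need not be redone. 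As in the Remark after Theorem~\ref{th:4.2}, Theorem~\ref{th:3} (rather than \eqref{eq:1} alone) is essential precisely to keep the $\ell=1$ contribution to the coefficient of $y_i(m)$ small enough.
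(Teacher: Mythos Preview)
Your proposal is correct and is precisely the argument the paper has in mind: the paper does not prove Theorem~\ref{th:4.4} itself (it is quoted from \cite{FK}) but remarks that ``one can use Theorem~\ref{th:3} to prove \eqref{eq:4.6} under the weaker assumption of being cross-partition-free \dots\ We leave the details to the interested reader.'' Your write-up supplies exactly those details---the symmetrized cross-Kleitman inequality together with Theorem~\ref{th:3} at $\ell=1$, followed by the same coefficient bookkeeping as in Theorem~\ref{th:4.2}---and since all your size-tuples satisfy $\sum j_i = n$, the argument indeed goes through under the weaker cross-partition-free hypothesis, as the paper asserts. (Two cosmetic points: the range ``$2\le\ell<\ell\le m$'' is a typo for $2\le\ell\le m$, and you should dispose of the trivial case $m=1$ separately, as Theorem~\ref{th:3} requires $m>\ell\ge1$.)
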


One can use Theorem~\ref{th:3} to prove \eqref{eq:4.6} under the weaker assumption of being cross-partition-free and show that equality holds only if
\[
\mathcal F_1 = \ldots = \mathcal F_k = \{F \subset [n] : |F| \geq m\}.
\]

We leave the details to the interested reader.

Let us mention that in \cite{FK} the maximum of $|\mathcal F_1| + \ldots + |\mathcal F_k|$ is determined for \emph{all} values of $n$ and~$k$.
The methods presented in this paper seem to be insufficient to tackle the cases $n \not \equiv -1 \, (\text{\rm mod }k)$.

\end{document}